\newtheorem{theorem}{Theorem}[section]
\newtheorem{lemma}[theorem]{Lemma}
\newtheorem{corollary}[theorem]{Corollary}
\theoremstyle{definition}
\newtheorem{definition}[theorem]{Definition}
\theoremstyle{remark}
\numberwithin{equation}{section}
\begin{document}
\title[ Distribution of certain $\ell$-regular partitions  and  triangular numbers]{Distribution of certain $\ell$-regular partitions  and  triangular numbers}
	\author{Chiranjit Ray}
	\address{Department of Mathematics, Harish-Chandra Research Institute, Prayagraj, Uttar Pradesh - 211 019, INDIA}
	\curraddr{}
	\email{chiranjitray.m@gmail.com}

	\subjclass[2010]{Primary: 05A17, 11P83, 11F11, 11F20.}
	\date{July, 2021}
	\keywords{$\ell$-regular partitions; Odd parts distinct; Triangular number; Modular forms; Distribution.}
	\thanks{} 
	\begin{abstract} Let $pod_{\ell}(n)$ be the number of $\ell$-regular partitions of $n$  with distinct odd parts.  In this article, prove that for any positive integer $k$, the set of non-negative integers $n$ for which  $pod_{\ell}(n)\equiv 0 \pmod{p^{k}}$ has density one under certain conditions on $\ell$ and $p$. For $p \in \{3,5,7\}$, we also exhibit  multiplicative identities for $pod_{p}(n)$ modulo $p.$
	\end{abstract}
	\maketitle
\section{Introduction and statement of results}
	A partition of a positive integer $n$ is a non-increasing sequence of positive integers $\lambda_1, \lambda_2,\cdots, \lambda_k$ whose sum is $n$. Each $\lambda_i$ is called a part of the partition. If $\ell$ is a positive integer, then a partition is called an $\ell$-regular partition if there is no part divisible by $\ell$. Many mathematicians have studied this partition function and proved several interesting arithmetic and combinatorial properties; see \cite{Cui2013, Hou2015, Xia2015}. Furthermore, various other types of partition functions are studied in the literature by imposing certain restrictions on the parts of an $\ell$-regular partitions of $n$.
For example, Corteel and Lovejoy~\cite{Corteel2004} introduced
the notion of overpartition as a partition of $n$ in which the first occurrence of a number may be overlined. Ray et al.\ \cite{Ray2018, Ray2021} studied arithmetic and density properties of $\ell$-regular overpartitions. In \cite{Andrews1967,  Andrews1979, Berkovich2002, Hirschhorn2010}, authors considered the partitions with distinct odd parts, and even parts are unrestricted.

 If $n$ is a positive integer, then the $n$th triangular number is $T_n=\frac{n(n+1)}{2}$.
 In this article, we consider $\ell$-regular partitions of $n$ with distinct odd parts and even parts are unrestricted, and denote by  $pod_{\ell}(n)$.  The generating function of $pod_{\ell}(n)$ is as follows (see~\cite{Hemanthkumar2019}).
\begin{align}
		\label{gf1.1}	\sum_{n=0}^{\infty}pod_{\ell}(n)q^n&= \frac{\psi(-q^{\ell})}{\psi(-q)},
\end{align}
 where $\psi(q)= \sum_{n=0}^{\infty}q^{T_n}$. 
\\

\par For a positive integer $M$ and $0\leq r \leq M$ , we define $$\displaystyle \delta_{r}^{g(n)}(M;X):= \frac{\#\{ 0\leq n \leq X: p(n)\equiv r \pmod M\}}{X},$$
where $g:\mathbb{N}\to \mathbb{Z}$ is an arbitrary function. Suppose $p(n)$  counts the number of integer partitions at $n.$
Parkin and Shanks \cite{PS} made the conjecture that the even and odd values of $p(n)$ are equally distributed, i.e, 
\begin{align*}
		\lim_{X\to\infty} \delta_{0}^{p(n)}(2;X) =\lim_{X\to\infty} \delta_{1}^{p(n)}(2;X)= \frac{1}{2}.
	\end{align*}	
Similar studies are done for some other partition functions, for example see \cite{Bringmann2008, Gordon1997, ray2020, Ray2021}. Recently,  Veena et  al.\ \cite{Veena2021} proved that the set of all positive integers such that $pod_3(n)\equiv 0 \pmod {3^k}$ have arithmetic density~$1$,~i.e.,
\begin{align}\label{{3}(n)}
		\lim_{X\to\infty} \delta_{0}^{{pod}_{3}(n)}(3^k;X) = 1.
\end{align}

Here we prove that for any odd positive integer $\ell$, the function ${pod}_{\ell}(n)$ is almost always divisible by $p^a$ with certain conditions, where $a$ is the largest positive integer for which $p^a$ divides $\ell$. In particular, we have the following result.
	\begin{theorem} \label{thm1}
	Let $\ell$ be an odd
integer greater than one, $p$ a prime divisor of $\ell$, and $a$ the largest
positive integer such that $p^a$
divides $\ell$. Then for every positive integer $k$ we have 
		\begin{align*}
		\lim_{X\to\infty} \delta_{0}^{{pod}_{\ell}(n)}(p^k;X) = 1.
		\end{align*}
	\end{theorem}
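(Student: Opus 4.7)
The plan is to exhibit a holomorphic modular form $F(q) \in M_w(\Gamma_0(N), \chi)$ with integer Fourier coefficients whose $q$-expansion agrees with $\sum pod_\ell(n)\,q^n$ modulo $p^k$, and then invoke Serre's theorem on the arithmetic density of divisible Fourier coefficients of such modular forms. First I will set $f_\delta := \prod_{n\geq 1}(1 - q^{\delta n})$ and use the classical identity $\psi(-q) = f_1 f_4/f_2$ to rewrite the generating function \eqref{gf1.1} as the eta quotient
$$A(q) := \sum_{n \geq 0}pod_\ell(n)\, q^n \;=\; \frac{f_2\, f_\ell\, f_{4\ell}}{f_1\, f_4\, f_{2\ell}},$$
which has weight zero and must be augmented before it can be recognised as a modular form.

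The key congruence I will use is the Frobenius-type identity $f_d^{\,p^j} \equiv f_{dp}^{\,p^{j-1}} \pmod{p^j}$ for all $d, j \geq 1$, which follows by iterating $(1-q)^p \equiv 1 - q^p \pmod p$ together with the lift-the-exponent principle (namely $a \equiv b \pmod{p^j}$ implies $a^p \equiv b^p \pmod{p^{j+1}}$ in $\mathbb{Z}[[q]]$, valid for odd primes $p$). Writing $\ell = p^a m$ with $\gcd(m, p) = 1$ and iterating gives $f_m^{\,p^{a+k-1}} \equiv f_\ell^{\,p^{k-1}} \pmod{p^k}$, so in particular every ratio of the shape $f_d^{\,p^{a+k-1}} / f_{dp^a}^{\,p^{k-1}}$ is congruent to $1$ modulo $p^k$.

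Next I will multiply $A(q)$ by an auxiliary product $G(q)$ formed out of these unit-modulo-$p^k$ ratios, chosen so that $G(q) \equiv 1 \pmod{p^k}$ automatically and such that $F(q) := A(q)\,G(q)$ is an eta quotient whose exponent vector $(r_\delta)_{\delta \mid N}$, on a suitable level $N$ divisible by $4\ell$, satisfies the Ligozat modularity criteria $\sum_\delta \delta\, r_\delta \equiv 0 \pmod{24}$ and $\sum_\delta (N/\delta)\, r_\delta \equiv 0 \pmod{24}$, together with the cusp-nonnegativity conditions $\tfrac{1}{24}\sum_\delta \gcd(d,\delta)^2 r_\delta/\delta \geq 0$ for every $d \mid N$. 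This will make $F(q)$ a holomorphic modular form on $\Gamma_0(N)$ with some Dirichlet character. Serre's theorem then forces almost every Fourier coefficient of $F(q)$ to be divisible by $p^k$, and since $F(q) \equiv A(q) \pmod{p^k}$, the density-one conclusion transfers directly to $pod_\ell(n)$.

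The main obstacle will be the explicit construction of $G(q)$: one must show that by varying the multiplicities of the available ratios $f_d^{\,p^{a+k-1}} / f_{dp^a}^{\,p^{k-1}}$ one can simultaneously land in the correct residue class modulo $24$ for both Ligozat congruences and make every cusp order nonnegative. Each such ratio contributes $0$ to $\sum_\delta \delta\, r_\delta$ but shifts $\sum_\delta (N/\delta)\, r_\delta$ and the individual cusp orders by specific amounts depending on $p,k,a,d$; the task therefore reduces to an explicit case analysis in terms of $\ell \bmod 24$, the prime $p$, and the valuation $a = v_p(\ell)$, generalising the $\ell = p = 3$ construction of \cite{Veena2021} to arbitrary $\ell$ and $p$.
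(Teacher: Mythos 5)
Your strategy is exactly the paper's: rewrite $\sum pod_\ell(n)q^n$ as the eta quotient $f_2f_\ell f_{4\ell}/(f_1f_4f_{2\ell})$, multiply by a product of Frobenius ratios that is $\equiv 1\pmod{p^k}$ so as to obtain a holomorphic modular form on some $\Gamma_0(N)$, and finish with Serre's density theorem. All of these ingredients are correct, and your congruence $f_d^{\,p^{a+k-1}}\equiv f_{dp^a}^{\,p^{k-1}}\pmod{p^k}$ is exactly the binomial-theorem step the paper uses. However, what you explicitly defer as "the main obstacle" --- the construction of $G(q)$ and the verification of the Ligozat congruences and cusp nonnegativity --- is precisely the content of the paper's Lemmas~\ref{lem2} and~\ref{lem1}, i.e.\ the substance of the proof. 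As written, your argument establishes that a suitable $G$ \emph{would} suffice, but does not produce one, so the theorem is not yet proved.

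For comparison, the paper's resolution of your obstacle is considerably simpler than the multi-parameter case analysis you anticipate. No family of ratios indexed by $d$ is needed: after the standard rescaling $q\mapsto q^{24}$ (which makes $\sum_\delta \delta r_\delta\equiv 0\pmod{24}$ automatic), one multiplies by the single ratio $\bigl(\eta(24z)^{p^a}/\eta(24p^az)\bigr)^{p^k}$, i.e.\ your $d=1$ ratio raised to the $p^k$-th power, yielding the form $\mathcal{B}_{\ell,p,k}(z)$ of weight $\tfrac{1}{2}p^k(p^a-1)$ on $\Gamma_0(384\ell)$. The second Ligozat congruence is what forces the level up to $384\ell$ and, importantly, only holds for $k>a$; the cases $k\le a$ are then deduced trivially from divisibility by the higher power $p^{a+1}$ --- a reduction your outline does not mention but would need. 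The cusp computation is then a finite check over the divisors of $384\ell$, organized by the table in the proof of Lemma~\ref{lem1}, where the key inequalities reduce to $p^a\bigl(1-\tfrac{1}{p^2}\bigr)>1$ and a comparison of $p^{2a}$ with $\ell$. So the missing piece is real but tractable; to complete your proof you should commit to a specific $G$, fix the level, and carry out the cusp analysis rather than leaving it as a reduction to an unspecified case analysis.
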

\noindent The above result is a generalization of \eqref{{3}(n)}. Furthermore, we get the following corollary as a direct consequence of the above theorem.
\begin{corollary}\label{cor1}
Let $p$ be an odd prime.
Then for any positive integer $k$, $pod_p(n)$ is almost always divisible by  $p^k$, i.e.,
\begin{align*}
	\lim_{X\to\infty} \delta_{0}^{{pod}_{p}(n)}(p^k;X) \equiv 1.	
\end{align*}
\end{corollary}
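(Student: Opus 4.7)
The plan is to deduce Corollary~\ref{cor1} as an immediate specialisation of Theorem~\ref{thm1} with $\ell=p$. First I would verify that the hypotheses of the theorem are satisfied in this case: since $p$ is an odd prime, $\ell=p$ is an odd integer greater than one; $p$ is trivially a prime divisor of $\ell$; and the largest exponent $a$ with $p^{a}\mid\ell$ is $a=1$. With these observations in hand, Theorem~\ref{thm1} applies directly and yields
$$\lim_{X\to\infty}\delta_{0}^{pod_{p}(n)}(p^{k};X)=1$$
for every positive integer $k$, which is precisely the statement of the corollary.

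A conceptual point worth emphasising is that the hypothesis of Theorem~\ref{thm1} requires only $p\mid\ell$ (so here $a=1$), yet the conclusion gives density one for divisibility by arbitrarily high powers $p^{k}$. Thus Corollary~\ref{cor1} is not merely the case $k=a=1$, but inherits the full strength of Theorem~\ref{thm1} in the edge case $\ell=p$. Specialising further to $p=3$ recovers the result of Veena et al.\ displayed as \eqref{{3}(n)}, so Corollary~\ref{cor1} is the natural prime-by-prime generalisation of that statement.

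Because the corollary is a formal consequence of Theorem~\ref{thm1}, I do not anticipate any real obstacle at this step. All of the substantive input, in particular the manipulation of the generating function~\eqref{gf1.1} into a congruence involving modular forms modulo $p^{k}$ and the appeal to a Serre-type density result for their coefficients, is absorbed into the proof of Theorem~\ref{thm1}; the corollary simply records what that proof delivers when $\ell$ is itself taken to be an odd prime.
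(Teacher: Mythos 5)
Your proposal is correct and matches the paper exactly: the paper states Corollary~\ref{cor1} as a direct consequence of Theorem~\ref{thm1}, obtained by taking $\ell=p$ (so that $a=1$), which is precisely your specialisation. No further argument is needed beyond checking the hypotheses, as you do.
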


Next, we consider 3-regular partitions of $n$ with distinct odd parts $pod_3(n)$. In \cite{Gireesh}, Gireesh et  al.\  proved the following congruences:
	\begin{align*}
	    pod_3(n)&\equiv  pod_3(9n+2) \pmod{3^2},\\
	    pod_3(3n+2)&\equiv  pod_3(27n+20) \pmod{3^3},\\
	     pod_3(27n+20)&\equiv  pod_3(243n+182) \pmod{3^4}.
	\end{align*}
For any positive integers $k$ and $n$, Veena et  al.\ 	\cite{Veena2021} proved the following result 
 	\begin{align} \label{pod3.1}
 	{pod_3}\left(3^{2k}n+ \frac{3^{2k}-1}{4}\right) \equiv  ~~ {pod_3}(n) \pmod{3}.
 	\end{align}
We have generalized \eqref{pod3.1} and deduced the following infinite families of multiplicative formulas for ${pod}_{3}(n)$ modulo $3$ using the theory of Hecke eigenforms. 
 \begin{theorem} \label{thm2.2_2}
 	Let $k$ be a positive integer, $p$ a prime with  $p \equiv 3 \pmod {4}$, and $\delta$  a non-negative integer
such that $p$ divides $4\delta+3$. Then for all $j\geq 0$ we have 
 	\begin{align*} 
 	{pod_3}\left(p^{k+1} n+ p\delta + \frac{3p-1}{4}\right) \equiv  {pod_3}\left(p^{k-1}n + \frac{4\delta +3-p}{4p}\right)  \pmod {3}.
 	\end{align*}
 \end{theorem}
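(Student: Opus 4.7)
The plan is to lift the generating function of $pod_3$ modulo $3$ to a genuine modular form and then exploit the Hecke algebra, in the spirit of \cite{Veena2021}. Writing $f_k := (q^k;q^k)_\infty$ and using the identity $\psi(-q) = f_1 f_4/f_2$ (a direct consequence of the Jacobi triple product), one first gets
\[
\sum_{n\geq 0} pod_3(n)\,q^n \;=\; \frac{\psi(-q^3)}{\psi(-q)} \;=\; \frac{f_2 f_3 f_{12}}{f_1 f_4 f_6},
\]
and then the Frobenius congruences $f_1^3 \equiv f_3$ and $f_2^3 \equiv f_6 \pmod 3$ collapse this to
\[
\sum_{n\geq 0} pod_3(n)\,q^n \;\equiv\; \frac{f_1^2 f_{12}}{f_2^2 f_4} \pmod 3.
\]

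The second step is to multiply the right-hand side by a carefully chosen auxiliary eta product (for instance, a power of $\eta(3z)^3 \equiv \eta(9z) \pmod 3$, which shifts the weight without disturbing the mod-$3$ content) and to perform the substitution $q \to q^4$ in order to absorb the $q^{1/4}$ fractional exponent coming from the $T_n = n(n+1)/2$ exponents. The outcome is a holomorphic cusp form $F \in S_k(\Gamma_0(N),\chi)$ of positive integer weight whose Fourier coefficients $a(n)$ satisfy $a(4m+1) \equiv pod_3(m) \pmod 3$ and $a(n) \equiv 0 \pmod 3$ whenever $n \not\equiv 1 \pmod 4$. Ligozat's order-of-vanishing conditions at each cusp of $\Gamma_0(N)$ certify holomorphy and cuspidality, while Sturm's bound confirms the mod-$3$ identification after matching a finite, explicit number of coefficients. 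One then argues that, modulo $3$, the form $F$ is a Hecke eigenform with complex multiplication by the Gaussian field $\mathbb{Q}(i)$; this is the natural source of the hypothesis $p \equiv 3 \pmod 4$, since those are exactly the rational primes inert in $\mathbb{Z}[i]$, and for such $p$ the Hecke eigenvalue $\lambda_p$ vanishes, giving $\lambda_p \equiv 0 \pmod 3$.

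The remainder is a short Hecke-algebra computation. From $\lambda_p \equiv 0 \pmod 3$, the standard recursion gives
\[
a(p^2 n) + \chi(p)\,p^{k-1}\,a(n) \;\equiv\; \lambda_p\,a(pn) \;\equiv\; 0 \pmod 3,
\]
and a bookkeeping check that the explicit weight and character force $-\chi(p)\,p^{k-1} \equiv 1 \pmod 3$ yields $a(p^2 n) \equiv a(n) \pmod 3$. Translating through the bijection $a(4m+1) \leftrightarrow pod_3(m)$ produces the base identity
\[
pod_3\!\left(p^2 M + \tfrac{p^2-1}{4}\right) \;\equiv\; pod_3(M) \pmod 3 \qquad (M \geq 0),
\]
from which the stated formula follows by a direct parameter substitution: the condition $p \mid 4\delta+3$ forces $\delta = pj + (p-3)/4$ for some $j \geq 0$, and setting $M := p^{k-1} n + j$ gives $p^2 M + (p^2-1)/4 = p^{k+1} n + p\delta + (3p-1)/4$ together with $M = p^{k-1} n + (4\delta+3-p)/(4p)$. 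The main obstacle I anticipate is the construction in step two: picking the auxiliary eta factor so that all Ligozat conditions hold, the weight/character combination produces the required scalar modulo $3$, and — most delicately — the CM provenance of $F$ (and hence the vanishing of $\lambda_p$ at inert primes) is genuinely established rather than merely expected.
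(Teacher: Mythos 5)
Your proposal follows essentially the same route as the paper: reduce $\sum pod_3(n)q^n \equiv \psi(-q)^2 \pmod 3$, realize $q\psi(-q^4)^2$ as a weight-one eta-quotient eigenform whose coefficients $a(4n+1)$ capture $pod_3(n)$, use the vanishing of $a(p)$ (equivalently the CM structure over $\mathbb{Q}(i)$) at primes $p\equiv 3\pmod 4$ to get $a(p^2n)=a(n)$, and substitute parameters. The only simplification worth noting is that no auxiliary eta factor or Sturm-bound check is needed, since $q\psi(-q^4)^2=\eta(4z)^2\eta(16z)^2/\eta(8z)^2$ already lies in $M_1(\Gamma_0(64),\chi_{-1})$ and appears on Martin's list of multiplicative eta-quotients, which is how the paper certifies the eigenform property you flag as the main remaining obstacle.
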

 
 \begin{corollary}\label{cor2}
 	Let $k$ be a positive integer and $p$ a prime with  $p \equiv 3 \pmod {4}$. Then for all $n\geq 0$ we have 
 	\begin{align*} 	
 	{pod_3}\left(p^{2k}n+ \frac{p^{2k}-1}{4}\right) \equiv  ~~ {pod_3}(n) \pmod{3}.
 	\end{align*}
 \end{corollary}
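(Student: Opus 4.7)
The plan is to derive Corollary~\ref{cor2} as a straightforward consequence of Theorem~\ref{thm2.2_2} by making a judicious choice of the parameter $\delta$ and then iterating (or, equivalently, running an induction on $k$). The key observation is that Theorem~\ref{thm2.2_2} takes an argument involving $p^{k+1}$ and rewrites it in terms of $p^{k-1}$, so it drops the power of $p$ by~$2$; the corollary wants to drop it by $2k$ all the way down to $p^0=1$, which suggests applying the theorem $k$ times.

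Concretely, I would perform the single reduction step first. In Theorem~\ref{thm2.2_2}, replace $k$ by $2k-1$ and set
\[
\delta \;=\; \frac{p^{2k-1}-3}{4}.
\]
Before using this, I need to check the hypotheses: since $p\equiv 3\pmod 4$, one has $p^{2k-1}\equiv 3\pmod 4$, so $\delta$ is a non-negative integer, and $4\delta+3 = p^{2k-1}$, which is clearly divisible by $p$ (for $k\geq 1$). With this choice, a short computation gives
\[
p\delta + \frac{3p-1}{4} \;=\; \frac{p^{2k}-1}{4}
\qquad\text{and}\qquad
\frac{4\delta+3-p}{4p} \;=\; \frac{p^{2k-2}-1}{4},
\]
so Theorem~\ref{thm2.2_2} yields
\[
pod_3\!\left(p^{2k}n + \frac{p^{2k}-1}{4}\right) \;\equiv\; pod_3\!\left(p^{2k-2}n + \frac{p^{2k-2}-1}{4}\right) \pmod{3}.
\]

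With this recursive identity in hand, the corollary follows by induction on~$k$. The base case $k=1$ is exactly the displayed congruence above with the right-hand side reducing to $pod_3(n)$; the inductive step applies the recursion once and uses the induction hypothesis with $n$ replaced by $p^{2}n + (p^{2}-1)/4$ (or, symmetrically, runs the recursion $k$ times). There is no real obstacle here: all the work is in verifying the arithmetic identities above and confirming the hypotheses of Theorem~\ref{thm2.2_2} for the chosen $\delta$. The main point to be careful about is the divisibility condition $p\mid 4\delta+3$, which is why the mod~$4$ congruence $p\equiv 3\pmod 4$ is essential — it forces $p^{2k-1}\equiv 3\pmod 4$ and thus makes $\delta$ an integer.
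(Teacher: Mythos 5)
Your proposal is correct and follows essentially the same route as the paper: both choose $4\delta+3=p^{2k-1}$ (so $\delta=(p^{2k-1}-3)/4$, an integer because $p\equiv 3\pmod 4$), extract the one-step recursion ${pod_3}\bigl(p^{2k}n+\tfrac{p^{2k}-1}{4}\bigr)\equiv {pod_3}\bigl(p^{2k-2}n+\tfrac{p^{2k-2}-1}{4}\bigr)\pmod 3$ from Theorem~\ref{thm2.2_2}, and iterate $k-1$ times. The only cosmetic difference is that you reindex the theorem's exponent as $2k-1$ while the paper instead substitutes $n\mapsto np^{k-1}$; these yield the identical congruence.
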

  \begin{corollary}\label{cor3}
 		Let $k$ be a positive integer and $p$ a prime with  $p \equiv 3 \pmod {4}$. Then for all $n\geq 0$ we have  
 	\begin{align*} 	
 	{pod_3}\left(p^{2k+1}n+\frac{p^{2k}-1}{4}\right) \equiv  ~~ {pod_3}(np) \pmod{3}.
 	\end{align*}
 \end{corollary}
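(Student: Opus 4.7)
The plan is to derive this corollary as an almost immediate consequence of Corollary~\ref{cor2}, which was itself stated just above as an application of Theorem~\ref{thm2.2_2}. So the main work has already been done by the time we reach this statement; all that remains is a suitable substitution.

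Concretely, I would start from the identity
\begin{align*}
pod_3\!\left(p^{2k}m + \frac{p^{2k}-1}{4}\right) \equiv pod_3(m) \pmod{3},
\end{align*}
which holds for every non-negative integer $m$ by Corollary~\ref{cor2}. Since the congruence is valid for \emph{all} $m\ge 0$, I would specialize it at $m = pn$, where $n$ ranges over the non-negative integers. This substitution is legal for any prime $p\equiv 3\pmod 4$ and any $k\ge 1$, and it produces exactly
\begin{align*}
pod_3\!\left(p^{2k+1}n + \frac{p^{2k}-1}{4}\right) \equiv pod_3(pn) \pmod{3},
\end{align*}
which is the stated congruence after writing $pod_3(pn) = pod_3(np)$.

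Since this is a one-step deduction, there is really no obstacle: the substance of the argument is hidden inside Theorem~\ref{thm2.2_2} (the Hecke eigenform computation modulo $3$) and in the inductive derivation of Corollary~\ref{cor2}. If a reader wanted a more self-contained derivation straight from Theorem~\ref{thm2.2_2}, I would instead choose $\delta = (p^{2k+1}-3p)/4$ in the theorem and simplify the arithmetic progression $p\delta + (3p-1)/4$ down to $(p^{2k+2}-1)/4$, then iterate, but this is genuinely the same proof as the one that yields Corollary~\ref{cor2} followed by the substitution $n\mapsto pn$. Therefore I would simply present the proof as the one-line substitution above, which is the cleanest route.
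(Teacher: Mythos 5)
Your proof is correct: since Corollary~\ref{cor2} holds for every non-negative integer, specializing it at $pn$ turns $p^{2k}(pn)+\frac{p^{2k}-1}{4}$ into $p^{2k+1}n+\frac{p^{2k}-1}{4}$ and yields the stated congruence at once. This is essentially the paper's route as well --- the paper merely says the proof proceeds ``in a similar fashion as Corollary~\ref{cor2}'', i.e.\ by the same iteration of the Hecke eigenform relation, and your one-line substitution is an equivalent (if anything, cleaner) way of packaging it.
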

  \noindent For example, if we consider $p=7$, and $k=1$,  then from Corollary~\ref{cor3} we have the following congruences:
 \begin{align*} 
 {pod_3}\left(343n+24\right) &\equiv{pod_3}\left(7n\right)\pmod 3.
  \end{align*}

In addition, we prove the following results for  
$pod_5$ and $pod_7$. Let  $\sigma_k(n)=\sum_{d|m}d^k$ be the standard divisor function and $\sigma_{2,\chi}(n)=\sum_{d|m}\chi(d)d^2$ be the generalized divisor function, where $\chi$ is the Dirichlet character modulo $4$  with $\chi(1)=1$ and $\chi(3)=-1$.
\begin{theorem}\label{pod5} For any positive integer $n$, we have
\begin{align}
 \label{con_pod_5} pod_5(n)&\equiv(-1)^n \sigma_1(2n+1) \pmod 5, ~\text{and}\\
\label{con_pod_7}     pod_7(n)&\equiv(-1)^{n+1} \frac{1}{8} \sigma_{2,\chi}(4n+3) \pmod 7.
\end{align}
\end{theorem}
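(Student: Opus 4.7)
\emph{Proof plan.} The strategy is to reduce each congruence modulo $p$ to a classical identity for $\psi(q)^{p-1}$, and then read off the divisor-sum formula. I begin from the standard factorization
\begin{equation*}
\psi(-q) \;=\; \frac{(q;q)_\infty\,(q^4;q^4)_\infty}{(q^2;q^2)_\infty},
\end{equation*}
which follows at once from $(q^2;q^4)_\infty(q^4;q^4)_\infty=(q^2;q^2)_\infty$ applied to the identity $\psi(-q)=(q;q)_\infty/(q^2;q^4)_\infty$. Combined with the Frobenius congruence $(q^a;q^a)_\infty^p\equiv(q^{ap};q^{ap})_\infty\pmod p$, this immediately yields $\psi(-q^p)\equiv\psi(-q)^p\pmod p$. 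Substituting into the generating function \eqref{gf1.1}, I obtain
\begin{equation*}
\sum_{n\ge 0} pod_p(n)\,q^n \;=\; \frac{\psi(-q^p)}{\psi(-q)} \;\equiv\; \psi(-q)^{p-1} \pmod p, \qquad p\in\{5,7\}.
\end{equation*}

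It remains to expand $\psi(-q)^{p-1}$. Writing $\psi(q)^k=\sum_{n\ge 0} r_k^T(n)\,q^n$, where $r_k^T(n)$ counts the ordered representations of $n$ as a sum of $k$ triangular numbers, the substitution $q\mapsto -q$ gives $\psi(-q)^k=\sum_n(-1)^n r_k^T(n)\,q^n$. For $p=5$ I invoke Legendre's classical identity $r_4^T(n)=\sigma_1(2n+1)$, and \eqref{con_pod_5} drops out. For $p=7$ I invoke the six-triangular-number analogue
\begin{equation*}
r_6^T(n) \;=\; -\tfrac{1}{8}\,\sigma_{2,\chi}(4n+3),
\end{equation*}
with $\chi$ the non-trivial character modulo $4$ defined in the statement; the factor $(-1)^n$ then combines with the minus sign to produce $(-1)^{n+1}$ in \eqref{con_pod_7}, and note that $\tfrac{1}{8}\sigma_{2,\chi}(4n+3)$ is in fact an integer (it equals $-r_6^T(n)$), so the congruence mod~$7$ is well-posed.

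The main obstacle is locating a direct reference for the $r_6^T$ identity, which is less widely cited than Legendre's formula for $r_4^T$. If no such reference is readily available, the identity can be established as an equality of weight-$3$ modular forms: $\psi(q)^6=q^{-3/4}\eta(2\tau)^{12}/\eta(\tau)^6$ and the Eisenstein-type series $\sum_{n\ge 0}\sigma_{2,\chi}(4n+3)\,q^n$ both belong to a common space of holomorphic modular forms on some $\Gamma_0(N)$ with Nebentypus determined by $\chi$, and the identity is confirmed by matching finitely many Fourier coefficients up to the Sturm bound.
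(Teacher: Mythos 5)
Your proposal is correct and follows essentially the same route as the paper: reduce $\sum pod_p(n)q^n=\psi(-q^p)/\psi(-q)$ to $\psi(-q)^{p-1}\pmod p$ via the Frobenius/binomial congruence, then invoke the classical formulas $t_4(n)=\sigma_1(2n+1)$ and $t_6(n)=-\tfrac18\sigma_{2,\chi}(4n+3)$ for the representation numbers by four and six triangular numbers (the paper cites Theorems 3 and 4 of Ono--Robins--Wahl for exactly these, so your ``missing reference'' for the six-triangular-number identity is available there). The only cosmetic difference is that you derive $\psi(-q^p)\equiv\psi(-q)^p$ through the eta-quotient factorization of $\psi(-q)$ and offer a Sturm-bound fallback, neither of which changes the substance of the argument.
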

We use  $Mathematica$ \cite{mathematica} for our computations.
	\section{Preliminaries}
	In this section, we recall some necessary facts and notation coming from modular forms (for further details we refer the reader to \cite{koblitz1993} and \cite{ono2004}).  For a positive integer $k$ denote by  $M_{k}(\Gamma)$ the complex vector space of modular forms of weight $k$ for a congruence subgroup $\Gamma$,  and let $\mathbb{H}$ be the complex upper half plane.
	\begin{definition}\cite[Definition 1.15]{ono2004}
		Let $\chi$ be a Dirichlet character modulo $N$. Then a modular form $f\in M_{\ell}(\Gamma_1(N))$ has Nebentypus character $\chi$ if
		$$f\left( \frac{az+b}{cz+d}\right)=\chi(d)(cz+d)^{\ell}f(z)$$ for all $z\in \mathbb{H}$ and all $\begin{bmatrix}
			a  &  b \\
			c  &  d      
		\end{bmatrix} \in \Gamma_0(N)$. We denote the space of such modular forms by $M_{\ell}(\Gamma_0(N), \chi)$. 
	\end{definition}
	\noindent Recall that Dedekind's eta-function is defined by
		$\eta(z):=q^{1/24}(q;q)_{\infty},$
where $q=e^{2\pi iz}$ and $z\in \mathbb{H}$. 
A function $f(z)$ is called an $eta$-quotient if it is expressible as a finite product of
the form $$f(z)=\prod_{\delta\mid N}\eta(\delta z)^{r_\delta},$$ where $N$ is a positive integer and each $r_{\delta}$ is an integer. The following two theorems allow one to determine whether a given eta-quotient is a modular form.
	\begin{theorem}\cite[Theorem 1.64]{ono2004}\label{thm_ono1} Suppose that $f(z)=\displaystyle\prod_{\delta\mid N}\eta(\delta z)^{r_\delta}$ 
		is an eta-quotient such that 
		\begin{eqnarray*}	
			\ell&=&\displaystyle\frac{1}{2}\sum_{\delta\mid N}r_{\delta}\in \mathbb{Z},\\
			\sum_{\delta\mid N} \delta r_{\delta}&\equiv& 0 \pmod{24} ~~\mbox{and}\\
			\sum_{\delta\mid N} \frac{N}{\delta}r_{\delta}&\equiv& 0 \pmod{24}.
		\end{eqnarray*}
		Then 
		$$
		f\left( \frac{az+b}{cz+d}\right)=\chi(d)(cz+d)^{\ell}f(z)
		$$
		for every  $\begin{bmatrix}
			a  &  b \\
			c  &  d      
		\end{bmatrix} \in \Gamma_0(N)$. Here 
		$$\chi(d):=\left(\frac{(-1)^{\ell} \prod_{\delta\mid N}\delta^{r_{\delta}}}{d}\right).$$
	\end{theorem}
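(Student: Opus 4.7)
The plan is to reduce the transformation behavior of $f$ under $\Gamma_{0}(N)$ to the classical transformation law of Dedekind's eta function under $SL_{2}(\mathbb{Z})$, and then to verify that the accumulated multiplier system collapses to the stated Kronecker symbol precisely because of the two divisibility hypotheses modulo $24$.

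First I would fix $\gamma=\left[\begin{smallmatrix}a&b\\c&d\end{smallmatrix}\right]\in\Gamma_{0}(N)$, so that $N\mid c$, and for each $\delta\mid N$ observe the identity
$$\delta\cdot\frac{az+b}{cz+d}=\frac{a(\delta z)+b\delta}{(c/\delta)(\delta z)+d}.$$
Since $\delta\mid c$ and $ad-bc=1$, the matrix $\gamma_{\delta}:=\left[\begin{smallmatrix}a&b\delta\\ c/\delta&d\end{smallmatrix}\right]$ lies in $SL_{2}(\mathbb{Z})$ and satisfies $\delta(\gamma z)=\gamma_{\delta}(\delta z)$. Applying the standard transformation $\eta(\gamma_{\delta}w)=\varepsilon(\gamma_{\delta})\bigl((c/\delta)w+d\bigr)^{1/2}\eta(w)$ at $w=\delta z$, raising to the $r_{\delta}$-th power, and multiplying over $\delta\mid N$, I obtain
$$f(\gamma z)=\Bigl(\prod_{\delta\mid N}\varepsilon(\gamma_{\delta})^{r_{\delta}}\Bigr)(cz+d)^{\ell}f(z),$$
where $\ell=\tfrac{1}{2}\sum_{\delta\mid N}r_{\delta}\in\mathbb{Z}$ is used to combine the $((c/\delta)(\delta z)+d)^{1/2}=(cz+d)^{1/2}$ factors.

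The bulk of the proof is then to identify the residual multiplier $\prod_{\delta\mid N}\varepsilon(\gamma_{\delta})^{r_{\delta}}$ with $\chi(d)$. Using Rademacher's (equivalently, Petersson's) explicit formula for the eta multiplier in terms of Dedekind sums, each $\varepsilon(\gamma_{\delta})$ decomposes as the Kronecker symbol $\bigl(\tfrac{\delta}{d}\bigr)$ times a $24$th root of unity whose exponent is an integer linear form in the entries of $\gamma_{\delta}$. After applying reciprocity for Dedekind sums uniformly in $\delta\mid N$, the $r_{\delta}$-weighted sum of these exponents reorganizes into two pieces: one that is $\equiv (a+d)\sum_{\delta\mid N}\delta\,r_{\delta}\pmod{24}$ (the diagonal contribution, carrying the $\delta z$ in the argument) and one that is $\equiv b\sum_{\delta\mid N}(N/\delta)r_{\delta}\pmod{24}$ (the off-diagonal contribution, from the $c/\delta$ entry). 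The two hypotheses $\sum\delta\,r_{\delta}\equiv 0\equiv\sum(N/\delta)r_{\delta}\pmod{24}$ are precisely what is needed to kill every one of these $24$th-root-of-unity phases. The remaining sign $(-1)^{\ell}$ in $\chi$ enters from fixing the principal branch of $(cz+d)^{1/2}$, and assembling everything produces $\prod_{\delta\mid N}\varepsilon(\gamma_{\delta})^{r_{\delta}}=\left(\tfrac{(-1)^{\ell}\prod_{\delta\mid N}\delta^{r_{\delta}}}{d}\right)=\chi(d)$.

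The main obstacle is clearly this last bookkeeping step: one must track the Dedekind-sum phases from each $\gamma_{\delta}$ carefully enough to see that the two $\mathbb{Z}$-linear forms $\sum\delta\,r_{\delta}$ and $\sum(N/\delta)r_{\delta}$ are jointly responsible for \emph{all} the spurious $24$th roots of unity and that no further congruence on the $r_{\delta}$ is required. Everything else, namely producing the auxiliary matrices $\gamma_{\delta}$, applying the eta transformation, and combining the square-root factors, is essentially mechanical; the delicate content of the theorem lives entirely in the Dedekind-sum calculation.
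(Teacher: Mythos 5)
This statement is quoted from \cite[Theorem 1.64]{ono2004} (the Gordon--Hughes--Newman/Ligozat criterion) and the paper gives no proof of it at all, so there is nothing internal to compare your argument against; I can only judge your outline against the standard proof, which is in fact the one you are sketching. Your reduction step is correct and complete as stated: for $\gamma=\left[\begin{smallmatrix}a&b\\c&d\end{smallmatrix}\right]\in\Gamma_0(N)$ and $\delta\mid N$ the matrix $\gamma_\delta=\left[\begin{smallmatrix}a&b\delta\\ c/\delta&d\end{smallmatrix}\right]$ is indeed integral of determinant one with $\delta(\gamma z)=\gamma_\delta(\delta z)$, and the hypothesis $\ell=\tfrac12\sum r_\delta\in\mathbb{Z}$ is exactly what lets the half-integral automorphy factors assemble into $(cz+d)^{\ell}$.

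The gap is that the entire content of the theorem is concentrated in the step you defer: identifying $\prod_{\delta\mid N}\varepsilon(\gamma_\delta)^{r_\delta}$ with $\chi(d)$. You assert that the $24$th-root-of-unity exponents reorganize into the two linear forms $(a+d)\sum_\delta\delta r_\delta$ and $b\sum_\delta (N/\delta)r_\delta$ modulo $24$, but this attribution is not quite right even at the level of bookkeeping: in the explicit multiplier formula (for $d$ odd, $c>0$) the term $(a+d)c$ evaluated at $\gamma_\delta$ becomes $(a+d)(c/\delta)$, which contributes to the $\sum (N/\delta)r_\delta$ form, while the $\sum\delta r_\delta$ form arises from the $bd(c^2-1)$ term via $b\delta d\bigl((c/\delta)^2-1\bigr)$; one also needs $\gcd(\delta,d)=1$ to convert $\bigl(\tfrac{c/\delta}{d}\bigr)$ into $\bigl(\tfrac{c\delta}{d}\bigr)$ and the evenness of $\sum r_\delta$ to discard $\bigl(\tfrac{c}{d}\bigr)^{\sum r_\delta}$, and the cases $c=0$ and $c<0$ must be treated separately. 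None of this is carried out, and since you yourself identify this calculation as where ``the delicate content of the theorem lives,'' the proposal is a correct roadmap of the classical argument rather than a proof. For the purposes of this paper that is immaterial, since the result is used as a black box with a citation.
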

\noindent If the eta-quotient $f(z)$ satisfy the conditions of Theorem \ref{thm_ono1} and  holomorphic at all of the cusps of $\Gamma_0(N)$, then $f\in M_{\ell}(\Gamma_0(N), \chi)$. To determine  the orders of an eta-quotient at each cusp is the following.
	\begin{theorem}\cite[Theorem 1.65]{ono2004}\label{thm_ono1.1}
		Let $c, d,$ and $N$ be positive integers with $d\mid N$ and $\gcd(c, d)=1$. If $f(z)$ is an eta-quotient satisfying the conditions of Theorem~\ref{thm_ono1} for $N$, then the order of vanishing of $f(z)$ at the cusp $\frac{c}{d}$ 
		is $$\frac{N}{24}\sum_{\delta\mid N}\frac{\gcd(d,\delta)^2r_{\delta}}{\gcd(d,\frac{N}{d})d\delta}.$$
	\end{theorem}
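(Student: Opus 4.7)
My plan is to reduce the computation to the Dedekind eta transformation law applied factor-by-factor, after conjugating by a matrix sending $i\infty$ to the cusp $c/d$. I would fix $\gamma=\begin{pmatrix} c & -v \\ d & u\end{pmatrix}\in SL_2(\mathbb{Z})$ with $cu+dv=1$, so that $\gamma(i\infty)=c/d$. The width of this cusp of $\Gamma_0(N)$ is the standard quantity $h=N/(d\gcd(d,N/d))$, and the order of vanishing of $f$ at $c/d$ equals $h$ times the $q_z$-order ($q_z:=e^{2\pi i z}$) of $f(\gamma z)$ at $i\infty$, since the automorphy factor $(dz+u)^{-k}$ attached to $\gamma$ contributes no $q_z$-order.

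The heart of the argument is to decompose, for each $\delta\mid N$,
$$\delta\gamma = A_\delta U_\delta,\qquad A_\delta\in SL_2(\mathbb{Z}),\quad U_\delta=\begin{pmatrix}\alpha_\delta & \beta_\delta \\ 0 & \delta/\alpha_\delta\end{pmatrix},$$
where comparing the bottom rows forces $\alpha_\delta=\gcd(\delta,d)$, and $\beta_\delta$ is chosen to clear denominators in the would-be entries of $A_\delta$; the coprimality $\gcd(d/\alpha_\delta,\delta/\alpha_\delta)=1$ then makes $A_\delta$ genuinely unimodular. Combining this with the transformation law $\eta(Aw)=\varepsilon(A)(Cw+D)^{1/2}\eta(w)$ for $A=\begin{pmatrix}*&*\\C&D\end{pmatrix}\in SL_2(\mathbb{Z})$ and the Fourier expansion $\eta(w)=e^{2\pi i w/24}\prod_{n\ge 1}(1-e^{2\pi i nw})$, I obtain $\eta(\delta\gamma z)\sim q_z^{\alpha_\delta^2/(24\delta)}$ up to nonvanishing factors, because $U_\delta z=\alpha_\delta^2 z/\delta+\alpha_\delta\beta_\delta/\delta$ and the automorphy and multiplier-system contributions affect only units and $q_z$-nonvanishing polynomial factors.

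Raising to $r_\delta$, summing over $\delta\mid N$, and finally multiplying by the cusp width $h$ yields
$$\mathrm{ord}_{c/d}(f)\;=\;h\sum_{\delta\mid N}\frac{r_\delta\,\alpha_\delta^2}{24\,\delta}\;=\;\frac{N}{24}\sum_{\delta\mid N}\frac{\gcd(d,\delta)^2\,r_\delta}{\gcd(d,N/d)\,d\,\delta},$$
which is exactly the formula asserted. The principal obstacle is carrying out the matrix factorization of the middle step uniformly in $\delta\mid N$ and verifying that the automorphy factor $(C_\delta U_\delta z+D_\delta)^{1/2}$ coming from $A_\delta$, together with the $24$th-root-of-unity multiplier $\varepsilon(A_\delta)$, truly drop out of the $q_z$-order computation; once those bookkeeping points are pinned down, the arithmetic identification of the leading exponent is immediate from the definition of $\eta$.
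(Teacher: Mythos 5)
Your proposal is correct and follows what is essentially the only standard proof of this formula: the paper itself gives no argument (the statement is quoted verbatim from Ono's book, and the underlying computation goes back to Ligozat), and that computation is precisely your route --- conjugate by $\gamma=\left(\begin{smallmatrix} c & -v\\ d & u\end{smallmatrix}\right)$, factor $\delta\gamma=A_{\delta}U_{\delta}$ with $\alpha_{\delta}=\gcd(\delta,d)$, apply the eta transformation law to read off the leading exponent $\alpha_{\delta}^{2}/(24\delta)$, and scale by the cusp width $N/(d\gcd(d,N/d))$, using $\gcd(d^{2},N)=d\gcd(d,N/d)$ for $d\mid N$. The bookkeeping points you flag do close up: $\beta_{\delta}$ exists because the congruence $(d/\alpha_{\delta})\beta_{\delta}\equiv u \pmod{\delta/\alpha_{\delta}}$ is solvable by the coprimality $\gcd(d/\alpha_{\delta},\delta/\alpha_{\delta})=1$ (which in turn forces the choice $\alpha_{\delta}=\gcd(\delta,d)$, since $\gcd(d,u)=1$), and the multiplier $\varepsilon(A_{\delta})$ and automorphy factors are nonvanishing and contribute nothing to the exponential order in $q_z$.
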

\noindent Let $m$ be a positive integer and $f(z) = \displaystyle \sum_{n=0}^{\infty} a(n)q^n \in M_{\ell}(\Gamma_0(N),\chi)$. Then the action of Hecke operator $T_m$ on $f(z)$ is defined by 
	\begin{align}
\label{hecke1}	f(z)|T_m := \sum_{n=0}^{\infty} \left(\sum_{d\mid \gcd(n,m)}\chi(d)d^{\ell-1}a\left(\frac{nm}{d^2}\right)\right)q^n.
	\end{align}
We note that $a(n)=0$ unless $n$ is a non-negative integer. The modular form $f(z)$ is called a Hecke eigenform if for every $m\geq2$ there exists a complex number $\lambda(m)$ for which 
	\begin{align}\label{hecke3}
	f(z)|T_m = \lambda(m)f(z).
	\end{align}
\section{Proofs of Theorem \ref{thm1}}
 To prove Theorem \ref{thm1}, we need the following lemmas.
	\begin{lemma}\label{lem2} Let $\ell$ be an odd integer greater than one, $p$ a prime divisor of $\ell$, and $a$ the largest positive integer such that $p^a$
divides $\ell$.  Then for any positive integer $k$ we have 
		\begin{align*}
			\frac{\eta(24z)^{p^{a+k}-1}\eta(48z)\eta(24\ell z)\eta(96\ell z)}{\eta(96z)\eta(48\ell z)\eta(24p^{a}z)^{p^{k}}} \equiv \sum_{n=0}^{\infty}pod_{\ell}(n)q^{24n+3(\ell-1)} \pmod {p^k}.
		\end{align*}
	\end{lemma}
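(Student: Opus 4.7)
The plan is to turn the generating function \eqref{gf1.1} into an eta-quotient, and then multiply and divide by a carefully chosen eta-quotient that is congruent to $1$ modulo $p^{k}$. First I would record the product representation
\[
\psi(-q) = \frac{(q;q)_\infty(q^{4};q^{4})_\infty}{(q^{2};q^{2})_\infty},
\]
an immediate consequence of Jacobi's identity $\psi(q)=(q^{2};q^{2})_\infty^{2}/(q;q)_\infty$. Substituting this (and its $q\mapsto q^{\ell}$ analogue) into \eqref{gf1.1}, replacing $q$ by $q^{24}$, and writing each $(q^{m};q^{m})_\infty$ as $q^{-m/24}\eta(mz)$, the fractional $q$-powers collect to give exactly $q^{-3(\ell-1)}$, so that
\[
\sum_{n=0}^{\infty} pod_{\ell}(n)\,q^{24n+3(\ell-1)} = \frac{\eta(48z)\,\eta(24\ell z)\,\eta(96\ell z)}{\eta(24z)\,\eta(96z)\,\eta(48\ell z)}.
\]

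Second, I would factor the left-hand side of the lemma as
\[
\frac{\eta(24z)^{p^{a+k}-1}\eta(48z)\eta(24\ell z)\eta(96\ell z)}{\eta(96z)\eta(48\ell z)\eta(24p^{a}z)^{p^{k}}}
= \frac{\eta(24z)^{p^{a+k}}}{\eta(24p^{a}z)^{p^{k}}}\cdot\frac{\eta(48z)\eta(24\ell z)\eta(96\ell z)}{\eta(24z)\eta(96z)\eta(48\ell z)}.
\]
The $q$-prefactors in the first ratio agree ($q^{p^{a+k}}$ in both numerator and denominator) and cancel, leaving
\[
\frac{\eta(24z)^{p^{a+k}}}{\eta(24p^{a}z)^{p^{k}}} = \frac{(q^{24};q^{24})_\infty^{p^{a+k}}}{(q^{24p^{a}};q^{24p^{a}})_\infty^{p^{k}}},
\]
which is a power series in $q$ with integer coefficients and constant term $1$.

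The heart of the argument is to show that this last power series is congruent to $1$ modulo $p^{k}$. Starting from the Freshman's dream $(1-X)^{p}\equiv 1-X^{p}\pmod{p}$ and multiplying over all factors of $(Q;Q)_\infty$ with $Q=q^{24}$, I get $(Q;Q)_\infty^{p}\equiv (Q^{p};Q^{p})_\infty\pmod{p}$. Combining this with the elementary lifting principle that $A\equiv B\pmod{p^{j}}$ forces $A^{p}\equiv B^{p}\pmod{p^{j+1}}$ (a short exercise with the binomial theorem, valid for power series in $\mathbb{Z}[[q]]$), I can raise successively to $p$-th powers and peel the inner argument $Q\mapsto Q^{p}\mapsto Q^{p^{2}}\mapsto\cdots$. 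After $a$ such peelings the accumulated congruence reads
\[
(Q;Q)_\infty^{p^{a+k}}\equiv (Q^{p^{a}};Q^{p^{a}})_\infty^{p^{k}}\pmod{p^{k+1}},
\]
which in particular holds modulo $p^{k}$. Since $(Q^{p^{a}};Q^{p^{a}})_\infty^{p^{k}}$ has constant term $1$ and is therefore invertible in $\mathbb{Z}_{p}[[q]]$, dividing preserves the congruence, and so the ratio is $\equiv 1\pmod{p^{k}}$. Multiplying by the eta-quotient identified in the first paragraph yields the lemma.

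The main obstacle is the careful bookkeeping of the fractional $q$-powers when passing to the eta form, and tracking the modulus through the iterated Frobenius lifting so that it is not degraded below $p^{k}$. This is precisely why the exponent $p^{a+k}$ (and not merely $p^{k}$) must appear on $\eta(24z)$: each of the $a$ peelings costs one power of $p$ in the modulus, so one must start with $a$ extra powers on top of the desired $p^{k}$.
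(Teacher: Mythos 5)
Your proposal is correct and follows essentially the same route as the paper: the paper likewise writes the left-hand side as the product of $\eta(24z)^{p^{a+k}}/\eta(24p^{a}z)^{p^{k}}$ (its $\mathcal{A}^{p^k}(z)$, shown to be $\equiv 1 \pmod{p^{k+1}}$ via the iterated binomial congruence $(q^{r};q^{r})_\infty^{p^k}\equiv (q^{pr};q^{pr})_\infty^{p^{k-1}}\pmod{p^k}$) with the eta-quotient $q^{3(\ell-1)}\psi(-q^{24\ell})/\psi(-q^{24})$. Your write-up just makes the $\psi(-q)$-to-eta conversion and the Frobenius-lifting bookkeeping more explicit than the paper does.
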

	\begin{proof} 	
		Consider 
		\begin{align*}
			\mathcal{A}(z) 
			=\prod_{n=1}^{\infty}\frac{(1-q^{24n})^{p^{a}}}{(1-q^{24p^{a}n})}  =\frac{\eta(24z)^{p^{a}}}{\eta(24p^{a}z)}.
		\end{align*}
		By the binomial theorem, for any positive integers $r$, $k$, and prime $p$ we have
		\begin{align*}
			(q^{r};q^{r})_{\infty}^{p^k}\equiv (q^{pr};q^{pr})_{\infty}^{p^{k-1}} \pmod {p^{k}}.
		\end{align*}
		Therefore,
		\begin{align*}
			\mathcal{A}^{p^{k}}(z) = \frac{\eta(24z)^{p^{a+k}}}{\eta(24p^{a}z)^{p^k}} \equiv 1 \pmod {p^{k+1}}.
		\end{align*}
		Define $\mathcal{B}_{\ell,p, k}(z)$ by
		
		$$\mathcal{B}_{\ell,p, k}(z)=	\frac{\eta(48z)\eta(24\ell z)\eta(96\ell z)}{\eta(24z)\eta(96z)\eta(48\ell z)}~\mathcal{A}^{p^{k}}(z).$$
		
		\noindent Now, modulo $p^{k+1}$, we have 
		\begin{align}\label{new-110}
			\notag	\mathcal{B}_{\ell,p, k}(z)	&=\frac{\eta(48z)\eta(24\ell z)\eta(96\ell z)}{\eta(24z)\eta(96z)\eta(48\ell z)} \frac{\eta(24z)^{p^{a+k}}}{\eta(24p^{a}z)^{p^k}}\\
			\notag	&\equiv \frac{\eta(48z)\eta(24\ell z)\eta(96\ell z)}{\eta(24z)\eta(96z)\eta(48\ell z)}\\
			&= q^{3(\ell-1)}~\frac{\psi(-q^{24\ell})}{\psi(-q^{24})}
		\end{align}
		Since $$\mathcal{B}_{\ell,p, k}(z)=\frac{\eta(24z)^{p^{a+k}-1}\eta(48z)\eta(24\ell z)\eta(96\ell z)}{\eta(96z)\eta(48\ell z)\eta(24p^{a}z)^{p^{k}}},$$ combining \eqref{gf1.1} and \eqref{new-110}, we obtain the required result.
	\end{proof}
	\begin{lemma}\label{lem1}
		Let $\ell$ be an odd
integer greater than one, $p$ a prime divisor of $\ell$, and $a$ the largest positive integer such that $p^a$ divides $\ell$.  Then, for a positive integer $k>a$, we have
		\begin{align*}
			\mathcal{B}_{\ell,p, k}(z) \in M_{\frac{p^k(p^{a}-1)}{2}}\left(\Gamma_0(384 \cdot \ell), \chi(\bullet)\right),
		\end{align*}
	where the Nebentypus character $$\chi(\bullet)=\left(\frac{(-1)^{\frac{p^k(p^{a}-1)}{2}} (24)^{p^{a+k}-1}\cdot 48 \cdot 24\ell \cdot 96\ell \cdot (96)^{-1} \cdot (48\ell)^{-1} \cdot (24p^{a})^{p^k}}{\bullet}\right).$$	
	\end{lemma}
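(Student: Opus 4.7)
The plan is to apply the two eta-quotient criteria of Theorem~\ref{thm_ono1} and Theorem~\ref{thm_ono1.1}. First I would identify $\mathcal{B}_{\ell,p,k}(z)$ as the eta-quotient $\prod_{\delta\mid N}\eta(\delta z)^{r_\delta}$ with level $N = 384\ell$ and nonzero exponents
\[
r_{24} = p^{a+k}-1,\quad r_{48}=r_{24\ell}=r_{96\ell}=1,\quad r_{96}=r_{48\ell}=-1,\quad r_{24p^a}=-p^k.
\]
Every relevant $\delta$ divides $384\ell$ because $p^a \mid \ell$, so the indexing is legitimate.

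Next I would verify the three arithmetic conditions of Theorem~\ref{thm_ono1}. The half-sum of exponents collapses to the weight $\frac{p^k(p^a-1)}{2}$, matching the statement. The weighted sum $\sum_\delta \delta\, r_\delta$ simplifies to $72(\ell-1)$, which is a multiple of $24$ because $\ell$ is odd. Writing $\ell = p^a m$ with $\gcd(m,p)=1$, the dual sum $\sum_\delta (N/\delta)\, r_\delta$ evaluates to $16 m p^k(p^{2a}-1)-12(\ell-1)$; the term $-12(\ell-1)$ is a multiple of $24$ since $\ell$ is odd, and $16 m p^k(p^{2a}-1)$ is divisible by $24$ because either $p=3$ (whence $3\mid p^k$) or $p\neq 3$ (whence $3\mid p^{2a}-1$ by Fermat). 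The Nebentypus $\chi$ in the statement is then produced by applying the explicit formula in Theorem~\ref{thm_ono1} to the $r_\delta$ data.

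The substantive step is verifying holomorphicity at every cusp via Theorem~\ref{thm_ono1.1}. For each divisor $d$ of $384\ell$, the order of vanishing at the cusp $c/d$ is a positive multiple of
\[
f(d) := \sum_{\delta\mid N}\frac{\gcd(d,\delta)^2\, r_\delta}{\delta},
\]
so one needs $f(d)\ge 0$ in every case. My plan is to stratify by the $2$-adic, $3$-adic, and $p$-adic valuations of $d$ together with the remaining part of $d$ coprime to $6p$, then run a finite case analysis. In most cases the term $\gcd(d,24)^2(p^{a+k}-1)/24$ easily swamps the negative contributions coming from $-1/96$, $-1/(48\ell)$, and $-p^k/(24p^a)$; the delicate regime is when $24p^a \mid d$, where $\gcd(d,24p^a)^2 p^k/(24p^a)$ becomes comparable in magnitude and one must use $k>a$ together with the factorization $\ell = p^a m$ to conclude positivity.

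The principal obstacle is precisely this cusp-by-cusp non-negativity of $f(d)$ in the tight cases, which is finite but technical. Once this is settled, Theorem~\ref{thm_ono1} supplies the transformation law with the correct weight and character, Theorem~\ref{thm_ono1.1} yields holomorphicity at all cusps, and together they give $\mathcal{B}_{\ell,p,k}\in M_{p^k(p^a-1)/2}(\Gamma_0(384\ell),\chi)$ as claimed.
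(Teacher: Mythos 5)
Your overall strategy is exactly the paper's: read off the exponent data $r_\delta$, check the three hypotheses of Theorem~\ref{thm_ono1} (your computations $\sum_\delta \delta r_\delta=72(\ell-1)$ and $\sum_\delta (N/\delta)r_\delta=16mp^{k}(p^{2a}-1)-12(\ell-1)$ are correct and agree with the paper's \eqref{ul} at $u=4$, including the $p=3$ versus $p\neq 3$ dichotomy), and then test non-negativity of the order at each cusp $c/d$ via Theorem~\ref{thm_ono1.1} by stratifying the divisors $d$ of $384\ell$ by their $2$-, $3$- and $p$-parts; the paper does the identical stratification $d=2^{r_1}3^{r_2}tp^{s}$ with the three ranges of $r_1$.

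The problem is the one step you defer. You claim that in the delicate regime $24p^{a}\mid d$ positivity ``must'' follow from $k>a$ and $\ell=p^{a}m$; it does not. Take $d=24p^{a}$. Then $\gcd(d,\delta)=24p^{a}$ for $\delta\in\{24p^{a},24\ell,48\ell,96\ell\}$ and $\gcd(d,\delta)=24$ for $\delta\in\{24,48,96\}$, and your quantity $f(d)=\sum_\delta \gcd(d,\delta)^2 r_\delta/\delta$ collapses to
\begin{align*}
24(p^{a+k}-1)+12-6-24p^{a+k}+\frac{(24p^{a})^2}{96\ell}\bigl(4-2+1\bigr)=18\left(\frac{p^{2a}}{\ell}-1\right),
\end{align*}
the $p^{a+k}$ terms cancelling exactly, independently of $k$. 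This is negative whenever $\ell>p^{2a}$ (e.g.\ $\ell=15$, $p=3$, $a=1$), so the eta-quotient genuinely has a pole at that cusp and the asserted membership in $M_{p^k(p^a-1)/2}(\Gamma_0(384\ell),\chi)$ fails there. You are in good company: the paper's own Case~(i) arrives at the same expression $3-3\ell/(t^2p^{2a})$ and disposes of it with the proviso ``when $p^{2a}\geq\ell$'', a condition that appears nowhere in the hypotheses of the lemma. So your blueprint runs aground precisely where you predicted the difficulty would lie, and it cannot be completed as written without importing an extra assumption such as $p^{2a}\geq\ell$ (or restricting to $\ell=p^{a}$, the case actually used for Corollary~\ref{cor1}).
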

	\begin{proof}  First we verify the first, second and third hypotheses of Theorem \ref{thm_ono1}. The weight of the $eta$-quotient $\mathcal{B}_{\ell,p, k}(z)$ is $\frac{1}{2}(p^{a+k}-p^{k})=\frac{p^k}{2}(p^{a}-1).$\\ Suppose the level of the $eta$-quotient $\mathcal{B}_{\ell,p, k}(z)$ is $96\ell u$, where $u$ is the smallest positive integer satisfying the following identity.
$$(p^{a+k}-1)~\frac{96\ell u}{24}+\frac{96\ell u}{48}+\frac{96\ell u}{24\ell}+\frac{96\ell u}{96\ell}-\frac{96\ell u}{96}-\frac{96\ell u}{48\ell}-p^{k}~\frac{96\ell u}{24p^{a}}\equiv0\pmod{24}.$$
Equivalently, we have
\begin{equation}
\label{ul}
    u\left(4\ell p^{k-a}\left(p^{2a}-1\right)-3(\ell-1)\right)\equiv0\pmod{24}.
\end{equation}
For all prime $p\neq 3$, note that $p^{2a}-1$ is multiple of $3$. Hence, from \eqref{ul}, we conclude that the level of the $eta$-quotient $\mathcal{B}_{\ell,p, k}(z)$ is $384 \ell$ for $k>a$.

	
		By Theorem \ref{thm_ono1.1}, the cusps of $\Gamma_{0}(384\ell)$ are given by  $\frac{c}{d}$ where $d~\mid~384\ell$ and $\gcd(c, d)~=~1$. Now  $\mathcal{B}_{\ell,p, k}(z)$ is holomorphic at a cusp $\frac{c}{d}$ if and only if
		\begin{align*}
			(p^{a+k}-1)~&\frac{\gcd(d,24)^2}{24}+ \frac{\gcd(d,48)^2}{48}+\frac{\gcd(d,24\ell)^2}{24\ell}+\frac{\gcd(d,96\ell)^2}{96\ell}\\
			&-\frac{\gcd(d,96)^2}{96}-\frac{\gcd(d,48\ell)^2}{48\ell}-p^{k}~\frac{\gcd(d,24p^{a})^2}{24p^{a}}\geq 0.
		\end{align*}
		Equivalently, if and only if 
		\begin{align}\label{c1}	
			\notag	&4\ell(p^{a+k}-1)~\frac{\gcd(d,24)^2}{\gcd(d,96\ell)^2}+2\ell~\frac{\gcd(d,48)^2}{\gcd(d,96\ell)^2}+4~\frac{\gcd(d,24\ell)^2}{\gcd(d,96\ell)^2}\\
			&-\ell~\frac{\gcd(d,96)^2}{\gcd(d,96\ell)^2}-2~\frac{\gcd(d,48\ell)^2}{\gcd(d,96\ell)^2}-4\ell p^{k-a}\frac{\gcd(d,24p^{a})^2}{\gcd(d,96\ell)^2}+1\geq 0.
		\end{align}
		
	\noindent	To check the positivity of \eqref{c1}, we have 
		to find all the possible divisors of $384\ell$. We define three sets as follows. 
		\begin{align*}
			\mathcal{H}_1&=\{2^{r_1}3^{r_2}tp^{s}: 0\leq r_1 \leq3, 0\leq r_2\leq1, t|\ell~~\text{but}~~ p~\nmid~t,~~and~~ 0\leq s \leq a\}, \\
			\mathcal{H}_2&=\{2^{r_1}3^{r_2}tp^{s}: r_1=4, 0\leq r_2\leq1, t|\ell~~\text{but}~~ p~\nmid~t,~~and~~ 0\leq s \leq a\}, \\
			\mathcal{H}_3&=\{2^{r_1}3^{r_2}tp^{s}: 5\leq r_1 \leq7, 0\leq r_2\leq1, t|\ell~~\text{but}~~ p~\nmid~t,~~and~~ 0\leq s \leq a\}.
		\end{align*}
		Note that $\mathcal{H}_1\cup \mathcal{H}_2\cup \mathcal{H}_3$ contains all positive divisors of $384\ell$. In the following table, we compute all necessary data to prove the positivity of \eqref{c1}.
		\begin{center}
			
			\begin{tabular}{|p{1.5cm}|p{2cm}|p{2cm}|p{2cm}|p{2.3cm}|p{2cm}|p{2cm}|}
				\hline
				\vspace{0.0001 in}Values of $d$ such that \vspace{0.0001 in} $d|384\ell$ &\vspace{0.0001 in} $\dfrac{\gcd(d,24)^2}{\gcd(d,96\ell)^2}$ \vspace{0.0001 in}&\vspace{0.0001 in}$\dfrac{\gcd(d,48)^2}{\gcd(d,96\ell)^2}$ &\vspace{0.0001 in}$\dfrac{\gcd(d,96)^2}{\gcd(d,96\ell)^2}$ & \vspace{0.0001 in}$\dfrac{\gcd(d,24p^{a})^2}{\gcd(d,96\ell)^2}$ & \vspace{0.0001 in}$\dfrac{\gcd(d,24 \ell)^2}{\gcd(d,96\ell)^2}$& \vspace{0.0001 in}$\dfrac{\gcd(d,48\ell)^2}{\gcd(d,96\ell)^2}$\\	
				\hline	
				\vspace{0.001 in} $d\in\mathcal{H}_1$ &~~ \vspace{0.0001 in} $1/t^2p^{2s}$   &~~ \vspace{0.0001 in} $1/t^2p^{2s}$&~~ \vspace{0.0001 in}$1/t^2p^{2s}$&~~ \vspace{0.0001 in}$1/t^2$&~~ \vspace{0.0001 in}$1$&~~ \vspace{0.0001 in}$1$\\
				\hline
				\vspace{0.001 in} $d\in\mathcal{H}_2$&~~\vspace{0.0001 in} $1/4t^2p^{2s}$  &~~ \vspace{0.0001 in} $1/t^2p^{2s}$ &~~ \vspace{0.0001 in} $1/t^2p^{2s}$& ~~ \vspace{0.0001 in} $1/4t^2$&~~ \vspace{0.0001 in} $1/4$&~~ \vspace{0.0001 in} $1$\\
				\hline
				\vspace{0.001 in} $d\in\mathcal{H}_2$&~~\vspace{0.0001 in} $1/16t^2p^{2s}$  &~~ \vspace{0.0001 in} $1/4t^2p^{2s}$ &~~ \vspace{0.0001 in} $1/t^2p^{2s}$& ~~ \vspace{0.0001 in} $1/16t^2$&~~ \vspace{0.0001 in} $1/16$&~~ \vspace{0.0001 in} $1/4$\\
				\hline
			\end{tabular}	
		
		\end{center}
		
		\noindent \textbf{Case (i).} If $d\in\mathcal{H}_1$, then left side of \eqref{c1} can be written as: 
		\begin{align}
			\label{1.n.1}\frac{\ell}{t^2}\left[4p^{k}\left(\frac{p^{a}}{p^{2s}}-\frac{1}{p^{a}}\right)-3\frac{1}{p^{2s}}\right]+3.
		\end{align}
		When $s=a$, the above quantities, $\left(3-\frac{3\ell}{t^2p^{2a}}\right)\geq 0$, as $p^{2a}\geq \ell$. For $0 \leq s < a $ it is clear that $\frac{p^{a}}{p^{2s}}-\frac{1}{p^{a}}>0$. Therefore,
		\begin{align*}
			\frac{p^{a}}{p^{2s}}-\frac{1}{p^{a}}-\frac{1}{p^{2s}}\geq \frac{ p^{2a}-p^{2(a-1)}-p^{a}}{p^{a+2s}}= \frac{p^{a}\left[p^{a}(1-\frac{1}{p^2})-1\right]}{p^{a+2s}}>0.
		\end{align*}
		The last inequality holds because $p^{a}(1-\frac{1}{p^2})>1$ for all prime $p$. Hence the quantities  in \eqref{1.n.1} are greater than or equal to $0$ when $p^{2a}\geq \ell$.\\
		
		\noindent \textbf{Case (ii).} If $d\in\mathcal{H}_2$ or $d\in\mathcal{H}_3$, then left side of \eqref{c1} can be written respectively as:
		
		\begin{align}
			\label{1.n.3}  &\frac{p^{k}\ell}{t^2}\left(\frac{p^{a}}{p^{2s}}-\frac{1}{p^{a}}\right), ~~\text{and}~~\\	
			\label{1.n.2}	&\frac{\ell}{4t^2}\left[p^{k}\left(\frac{p^{a}}{p^{2s}}-\frac{1}{p^{a}}\right)-3\frac{1}{p^{2s}}\right]+\frac{3}{4}.
		\end{align}
		By the similar argument as case~ (i), the quantities  in \eqref{1.n.2} and \eqref{1.n.3} are greater than or equal to~$0$.\\
		
	\noindent Therefore, by \textbf{Case (i)} and \textbf{Case (ii)}, the orders of vanishing of $\mathcal{B}_{\ell,p, k}(z)$ at the at the cusp $\frac{c}{d}$ is nonnegative. So  $\mathcal{B}_{\ell,p, k}(z)$ is holomorphic at every cusp $\frac{c}{d}$. We have also verified the Nebentypus character  by Theorem \ref{thm_ono1}. Hence $\mathcal{B}_{\ell,p, k}(z)$ is a modular form of weight $\frac{p^k(p^{a}-1)}{2}$ on $\Gamma_0(384\cdot \ell)$ with  Nebentypus character $\chi(\bullet)$. 
	\end{proof}
	We state the following result of Serre, which is useful to prove  Theorem \ref{thm1}.
	\begin{theorem}\cite[Theorem~2.65]{ono2004}\label{serre}
		Let $k, m$  be positive integers. If  $f(z)\in M_{k}(\Gamma_0(N), \chi(\bullet))$ has the Fourier expansion $f(z)=\sum_{n=0}^{\infty}c(n)q^n\in \mathbb{Z}[[q]],$
		then there is a constant $\alpha>0$  such that
		$$
		\# \left\{n\leq X: c(n)\not\equiv 0 \pmod{m} \right\}= \mathcal{O}\left(\frac{X}{\log^{\alpha}{}X}\right).
		$$
	\end{theorem}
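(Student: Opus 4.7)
The plan is to follow Serre's original strategy, combining Galois representations attached to Hecke eigensystems with Chebotarev's density theorem and a multiplicative sieve. As a first step I would reduce to the case $m = \ell^j$ for a single prime $\ell$, since the general case follows by the Chinese Remainder Theorem once each prime-power factor is handled separately. Inside the finite $\mathbb{Z}/\ell^j\mathbb{Z}$-module obtained by reducing the integral lattice of $M_k(\Gamma_0(N),\chi)$ modulo $\ell^j$, the Hecke operators $T_p$ (for $p\nmid N\ell$) generate a finite commutative algebra $\mathbb{T}$.

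Because $\mathbb{T}$ is Artinian, it decomposes as a finite product of local rings; correspondingly $f \pmod{\ell^j}$ splits as a sum of generalised Hecke eigenforms, and it suffices to bound the non-vanishing $\pmod{\ell^j}$ coefficients of each such component. To each mod-$\ell$ eigensystem $\lambda$ appearing in the support of $f$, the Deligne--Serre construction attaches a semisimple Galois representation
\begin{align*}
\rho_{\lambda} \,:\, \mathrm{Gal}(\overline{\mathbb{Q}}/\mathbb{Q}) \;\longrightarrow\; \mathrm{GL}_2(\overline{\mathbb{F}}_{\ell})
\end{align*}
that is unramified outside $N\ell$ and satisfies $\mathrm{tr}(\rho_{\lambda}(\mathrm{Frob}_p)) \equiv \lambda(T_p) \pmod{\ell}$ for all such $p$. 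Since $\rho_{\lambda}$ factors through a finite Galois group, Chebotarev's density theorem identifies the density of primes $p$ with $\lambda(T_p)\equiv 0 \pmod{\ell}$ as $\delta_{\lambda} = \#\{g\in \mathrm{Im}(\rho_{\lambda}) : \mathrm{tr}(g)=0\}/\#\mathrm{Im}(\rho_{\lambda})$, which is strictly positive for the cuspidal and Eisenstein eigensystems that actually occur.

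The final step is analytic. Multiplicativity of eigenform coefficients together with the Hecke recursion
\begin{align*}
a(p^{r+1}) \;=\; \lambda(T_p)\,a(p^r) \,-\, \chi(p)\,p^{k-1}\,a(p^{r-1})
\end{align*}
forces every $n$ with $a(n)\not\equiv 0 \pmod{\ell}$ to avoid first-power divisibility by any prime in the positive-density set $S_{\lambda} = \{p : \lambda(T_p)\equiv 0 \pmod{\ell}\}$. A Wirsing--Landau count for integers whose prime factorisation avoids a set of primes of density $\delta_{\lambda}$ to first order then yields
\begin{align*}
\#\{n\leq X : a(n)\not\equiv 0 \pmod{\ell}\} \;=\; \mathcal{O}\!\left(\frac{X}{(\log X)^{\delta_{\lambda}}}\right).
\end{align*}
Summing over the finitely many eigensystems in the support of $f$ and bootstrapping from $\ell$ to $\ell^j$ via the deformed representation $\rho_{j}:\mathrm{Gal}(\overline{\mathbb{Q}}/\mathbb{Q})\to \mathrm{GL}_2(\mathbb{Z}/\ell^j\mathbb{Z})$ then delivers the stated $\mathcal{O}(X/\log^{\alpha}X)$ estimate, for $\alpha$ the minimum of the densities that arise.

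The main obstacle is the Galois-theoretic input: the Deligne--Serre attachment of $\rho_{\lambda}$ rests on the étale cohomology of modular curves, and one has to verify in each case that the image of $\rho_{\lambda}$ contains an element of trace zero so that Chebotarev actually produces a positive density of vanishing primes; reducible (Eisenstein) eigensystems require a separate but parallel argument using cyclotomic characters and Dirichlet's theorem. Once a positive-density set $S_{\lambda}$ is in hand, the remaining sieve estimate is routine, with only a bounded distortion coming from the finitely many ramified primes dividing $N\ell$, which does not affect the asymptotic density.
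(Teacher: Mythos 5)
The paper does not prove this statement at all: it is quoted as a black box from Ono's book (Theorem 2.65), where it is attributed to Serre, and is used only as an ingredient in the proof of Theorem \ref{thm1}. So the question is whether your reconstruction of Serre's argument is sound, and it contains one genuine gap. After correctly decomposing $f \pmod{\ell^j}$ according to the local factors of the Artinian Hecke algebra $\mathbb{T}$, you apply multiplicativity and the recursion $a(p^{r+1})=\lambda(T_p)a(p^r)-\chi(p)p^{k-1}a(p^{r-1})$ to each component to conclude that $a(n)\not\equiv 0$ forces $n$ to avoid exact first-power divisibility by every prime in $S_{\lambda}$. But that relation holds only for honest eigenforms: the coefficients of a \emph{generalised} eigenform are not multiplicative, since $T_p$ acts on a local component as $\lambda(T_p)$ plus a nontrivial nilpotent, not as the scalar $\lambda(T_p)$. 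The standard repair, which is what Serre actually does, replaces multiplicativity by nilpotency: for $p\in S_{\lambda}$ the operator $T_p$ is nilpotent on the component, and since the component has finite length there is a uniform $e$ such that any product $T_{p_1}\cdots T_{p_e}$ with distinct $p_i\in S_{\lambda}$ annihilates it; unwinding \eqref{hecke1}, this shows $c(n)\equiv 0$ whenever $n$ is divisible to exactly the first power by at least $e$ distinct primes of $S_{\lambda}$, and the Landau--Selberg--Delange count of the surviving $n\leq X$ gives $\mathcal{O}\bigl(X(\log\log X)^{e-1}/(\log X)^{\delta}\bigr)$, which is $\mathcal{O}(X/\log^{\alpha}X)$ for any $\alpha<\delta$ --- weaker than the exponent $\delta_{\lambda}$ you claim, but entirely sufficient for the theorem.

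Two smaller corrections. First, positivity of $\delta_{\lambda}$ is not a case-by-case verification as you suggest: the representations attached to modular eigenforms are odd, so complex conjugation $c$ satisfies $\operatorname{tr}\rho_{\lambda}(c)=0$ (for $\ell=2$ one has $1+1\equiv 0$ as well), and Chebotarev applied to the class of $c$ always yields a positive-density set of trace-zero Frobenii. Second, the bootstrap from $\ell$ to $\ell^j$ cannot proceed through ``the deformed representation'' attached to a mod-$\ell$ eigensystem, since a lift to $\mathbb{Z}/\ell^j\mathbb{Z}$ of a given mod-$\ell$ eigensystem need not exist; one instead takes the $\ell$-adic representations attached to the characteristic-zero eigenforms spanning a lattice in $M_k(\Gamma_0(N),\chi)$, reduces them modulo $\ell^j$, and applies Chebotarev to the finite image of their product, targeting classes where Frobenius acts like $c$ so that all the relevant $T_p$ become simultaneously topologically nilpotent modulo $\ell^j$. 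With these repairs your outline is exactly the Serre route that Ono's cited theorem rests on.
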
 
	\begin{proof}[Proof of Theorem \ref{thm1}]Suppose $k>1$ is a positive integer. From Lemma~\ref{lem1}, we have 
		$$	\mathcal{B}_{\ell,p, k}(z)=\frac{\eta(24z)^{p^{a+k}-1}\eta(48z)\eta(24\ell z)\eta(96\ell z)}{\eta(96z)\eta(48\ell z)\eta(24p^{a}z)^{p^{k}}} \in M_{\frac{p^k(p^{a}-1)}{2}}\left(\Gamma_0(384\cdot \ell), \chi(\bullet)\right).$$  Also the Fourier coefficients of the $eta$-quotient $\mathcal{B}_{\ell,p, k}(z)$ are integers. So, by Theorem \ref{serre} and Lemma \ref{lem2}, we can find a constant $\alpha>0$ such that
		$$
		\# \left\{n\leq X:pod_{\ell}(n)\not\equiv 0 \pmod{p^k} \right\}= \mathcal{O}\left(\frac{X}{\log^{\alpha}{}X}\right).
		$$
		Hence  $$\lim\limits_{X\to +\infty}\frac{	\# \left\{n\leq X:pod_{\ell}(n)\equiv 0 \pmod{p^k} \right\}}{X}=1.$$
This allows us to prove the required divisibility by $p^k$ for all $k>a$ which trivially gives divisibility by $p^k$ for all positive integer $k\leq a$. This completes the proof of Theorem \ref{thm1}.
	\end{proof}
	
\section{Proofs of Theorem \ref{thm2.2_2} - \ref{pod5}}
\noindent In this  section,  we discuss about the multiplicative nature of  $pod_{3}(n)$ and we get Corollary~\ref{cor2} - \ref{cor3} as a special case of Theorem~\ref{thm2.2_2}.
 Then we obtain some congruence formulas for $pod_{5}(n)$ and $pod_{7}(n)$ in the Theorem \ref{pod5}-\ref{pod5}.

\begin{proof}[Proof of Theorem~\ref{thm2.2_2}].By \eqref{gf1.1}  we have 
\begin{align}
\label{gf1.1.1} \sum_{n=0}^{\infty}pod_3(n)q^n &= \frac{\psi(-q^3)}{\psi(-q)}\equiv \psi(-q)^2 \pmod{3}.
\end{align}
  Using Theorem \ref{thm_ono1} and Theorem \ref{thm_ono1.1} we have  $\dfrac{\eta(4z)^2 \eta(16z)^2}{\eta(8z)^2}\in M_1(\Gamma_0(64),\chi_{-1}(\bullet))$, where $\chi_{-1}$ is defined by $\chi_{-1}(\bullet)=(\frac{-1}{\bullet}).$ Therefore the above eta-quotient has a Fourier expansion and consider
\begin{align}\label{gf1.1.2}
   \frac{\eta(4z)^2 \eta(16z)^2}{\eta(8z)^2} = q - 2 q^5 + 3 q^9 - 6 q^{13} + 7 q^{17} - 10 q^{21}+ \cdots =\displaystyle\sum_{n=1}^{\infty} a(n)q^n. 
\end{align}
From \eqref{gf1.1.1} and \eqref{gf1.1.2}, for any positive integer $n$, we have
\begin{align}\label{gf1.1.3}
pod_{3}(n)\equiv  a(4n+1) \pmod{3}.
\end{align}
 From \cite{martin},  we know that  $\frac{\eta(4z)^2 \eta(16z)^2}{\eta(8z)^2}$ is a Hecke eigenform.
Using \eqref{hecke1} and \eqref{hecke3} we obtain
\begin{align*}
	\left.\frac{\eta(4z)^2 \eta(16z)^2}{\eta(8z)^2}\right\vert{T_p} = \sum_{n=1}^{\infty} \left(a(pn) + \left(\frac{-1}{p}\right) a\left(\frac{n}{p}\right) \right)q^n = \lambda(p) \sum_{n=1}^{\infty} a(n)q^n.
\end{align*}
Since $a(p)=0$ for all $p\equiv3\pmod{4}$, equating the coefficients on the both sides, we have $\lambda(p)=0$, and
	\begin{align}\label{thm2.2.2}
	a(pn) + \left(\frac{-1}{p}\right) a\left(\frac{n}{p}\right)= 0.
	\end{align}
Replacing $n$ by $4p^kn+4\delta+3$ in \eqref{thm2.2.2}, we get the following 
\begin{align}\label{thm2.2.4}
a\left(4\left( p^{k+1}n+ p\delta +\frac{3p-1}{4}\right)+1\right)= (-1)\left(\frac{-1}{p}\right) a\left(4\left(p^{k-1}n+ \frac{4\delta +3-p}{4p}\right)+1\right).
\end{align}	
Note that  $\frac{3p-1}{4}$ and $\frac{4\delta+3-p}{4p}$  are integers. Using \eqref{gf1.1.3} and \eqref{thm2.2.4} we obtain
	
	\begin{align} \label{thm2.2.4.11}
	pod_{3}\left(p^{k+1} n+ p\delta + \frac{3p-1}{4}\right) \equiv(-1)\left(\frac{-1}{p}\right)pod_{3}\left(p^{k-1}n + \frac{4\delta +3-p}{4p}\right)  \pmod {3}.
	\end{align}
Since $\left(\frac{-1}{p}\right)=-1$, for all prime $p\equiv 3 \pmod{4}$, our result now follows from~\eqref{thm2.2.4.11}.
\end{proof}	
\begin{proof}[Proof of Corollary~\ref{cor2}]
Let $p\equiv 3\pmod {4}$ be a prime. 
Now replacing $n$ with $np^{k-1}$ in \eqref{thm2.2.4.11}
and then considering $4\delta+3=p^{2k-1}$ we have
	\begin{align*}
	pod_{3}\left(\frac{p^{2k}(4n+1)-1}{4}\right) \equiv  pod_{3}\left(\frac{p^{2(k-1)}(4n+1)-1}{4}\right) \pmod 3.
	\end{align*}
	By repeating the above relation for ($k - 1$)-times, we obtain 	
	\begin{align*}
	pod_{3}\left(\frac{p^{2k}(4n+1)-1}{4}\right) \equiv  pod_{3}(n) \pmod 3.
	\end{align*}		
	Then the result directly follows from the above congruence.
\end{proof}	
\begin{proof}[Proof of Corollary~\ref{cor3}] We can prove Corollary~\ref{cor3}  in a similar fashion as Corollary~\ref{cor2}.
\end{proof}	
 For a positive integer $k$, let $t_k(n)$ denote the number of representations of $n$ as a sum of $k$ triangular numbers.
 It is easy to see that if $k > 1$ then
 $$\psi(q)^k= \sum_{n=0}^{\infty}t_k(n)q^n.$$
 If $p$ an odd prime number, then applying the binomial theorem on \eqref{gf1.1} we have $$\sum_{n=0}^{\infty}pod_{p}(n)q^n\equiv \psi(-q)^{p-1} \pmod p.$$
 Therefore for a positive integer $n$ and a odd prime $p$ we obtain 
 \begin{align} \label{podp}
 pod_{p}(n)\equiv(-1)^nt_{p-1}(n) \pmod p.
 \end{align}
 \begin{proof}[Proof of Theorem~\ref{pod5}] Suppose $t_4(n)$ is the number of representations of $n$ as a sum of $4$ triangular numbers. Then Ono et al.\ \cite[Theorem 3]{ono1995} showed that  
 $$t_4(n)=\sigma_1(2n+1).$$
 Now \eqref{con_pod_5} directly follows from the above equation and \eqref{podp}. 
Similarly, the result \eqref{con_pod_7} follows from  \cite[Theorem 4]{ono1995} and \eqref{podp}.
 \end{proof}
\subsection*{Acknowledgements} 
The author has carried out this work at Harish-Chandra Research Institute (India) as a Postdoctoral Fellow. 
\bibliographystyle{plain}
\bibliography{regular}

\begin{thebibliography}{10}

\bibitem{Andrews1967}
George~E. Andrews.
\newblock A generalization of the {G}\"{o}llnitz-{G}ordon partition theorems.
\newblock {\em Proc. Amer. Math. Soc.}, 18:945--952, 1967.

\bibitem{Andrews1979}
George~E. Andrews.
\newblock Partitions and {D}urfee dissection.
\newblock {\em Amer. J. Math.}, 101(3):735--742, 1979.

\bibitem{Berkovich2002}
Alexander Berkovich and Frank~G. Garvan.
\newblock Some observations on {D}yson's new symmetries of partitions.
\newblock {\em J. Combin. Theory Ser. A}, 100(1):61--93, 2002.

\bibitem{Bringmann2008}
Kathrin Bringmann and Jeremy Lovejoy.
\newblock Rank and congruences for overpartition pairs.
\newblock {\em Int. J. Number Theory}, 4(2):303--322, 2008.

\bibitem{Corteel2004}
Sylvie Corteel and Jeremy Lovejoy.
\newblock Overpartitions.
\newblock {\em Trans. Amer. Math. Soc.}, 356(4):1623--1635, 2004.

\bibitem{Cui2013}
Su-Ping Cui and Nancy S.~S. Gu.
\newblock Arithmetic properties of {$\ell$}-regular partitions.
\newblock {\em Adv. in Appl. Math.}, 51(4):507--523, 2013.

\bibitem{Gireesh}
D.~S. Gireesh, M.~D. Hirschhorn, and M.~S.~Mahadeva Naika.
\newblock On 3-regular partitions with odd parts distinct.
\newblock {\em Ramanujan J.}, 44(1):227--236, 2017.

\bibitem{Gordon1997}
Basil Gordon and Ken Ono.
\newblock Divisibility of certain partition functions by powers of primes.
\newblock {\em Ramanujan J.}, 1(1):25--34, 1997.

\bibitem{Hemanthkumar2019}
B.~Hemanthkumar, H.~S. Sumanth~Bharadwaj, and M.~S. Mahadeva~Naika.
\newblock Arithmetic properties of 9-regular partitions with distinct odd
  parts.
\newblock {\em Acta Math. Vietnam.}, 44(3):797--811, 2019.

\bibitem{Hirschhorn2010}
Michael~D. Hirschhorn and James~A. Sellers.
\newblock Arithmetic properties of partitions with odd parts distinct.
\newblock {\em Ramanujan J.}, 22(3):273--284, 2010.

\bibitem{Hou2015}
Qing-Hu Hou, Lisa~H. Sun, and Li~Zhang.
\newblock Quadratic forms and congruences for {$\ell$}-regular partitions
  modulo 3, 5 and 7.
\newblock {\em Adv. in Appl. Math.}, 70:32--44, 2015.

\bibitem{mathematica}
Wolfram~Research{,} Inc.
\newblock Mathematica, {V}ersion 10.0.
\newblock Champaign, IL, 2014.

\bibitem{koblitz1993}
Neal Koblitz.
\newblock {\em Introduction to elliptic curves and modular forms}, volume~97 of
  {\em Graduate Texts in Mathematics}.
\newblock Springer-Verlag, New York, second edition, 1993.

\bibitem{martin}
Yves Martin.
\newblock Multiplicative {$\eta$}-quotients.
\newblock {\em Trans. Amer. Math. Soc.}, 348(12):4825--4856, 1996.

\bibitem{ono2004}
Ken Ono.
\newblock {\em The web of modularity: arithmetic of the coefficients of modular
  forms and {$q$}-series}, volume 102 of {\em CBMS Regional Conference Series
  in Mathematics}.
\newblock Published for the Conference Board of the Mathematical Sciences,
  Washington, DC; by the American Mathematical Society, Providence, RI, 2004.

\bibitem{ono1995}
Ken Ono, Sinai Robins, and Patrick~T. Wahl.
\newblock On the representation of integers as sums of triangular numbers.
\newblock {\em Aequationes Math.}, 50(1-2):73--94, 1995.

\bibitem{PS}
Thomas~R. Parkin and Daniel Shanks.
\newblock On the distribution of parity in the partition function.
\newblock {\em Math. Comp.}, 21:466--480, 1967.

\bibitem{Ray2018}
Chiranjit Ray and Rupam Barman.
\newblock Infinite families of congruences for {$k$}-regular overpartitions.
\newblock {\em Int. J. Number Theory}, 14(1):19--29, 2018.

\bibitem{ray2020}
Chiranjit Ray and Rupam Barman.
\newblock On {A}ndrews' integer partitions with even parts below odd parts.
\newblock {\em J. Number Theory}, 215:321--338, 2020.

\bibitem{Ray2021}
Chiranjit Ray and Kalyan Chakraborty.
\newblock Certain eta-quotients and $\ell$-regular overpartitions.
\newblock {\em Ramanujan J.}, 2020.

\bibitem{Veena2021}
V.~S. Veena and S.~N. Fathima.
\newblock Arithmetic properties of 3-regular partitions with distinct odd
  parts.
\newblock {\em Abh. Math. Semin. Univ. Hambg.}, 2021.

\bibitem{Xia2015}
Ernest X.~W. Xia.
\newblock Congruences for some {$l$}-regular partitions modulo {$l$}.
\newblock {\em J. Number Theory}, 152:105--117, 2015.

\end{thebibliography}
\end{document}